\documentclass[11pt,reqno]{amsart}
\usepackage[T1]{fontenc}
\usepackage{lmodern}
\usepackage{amsmath,amssymb,amsthm,mathtools}
\usepackage[margin=1.12in]{geometry}
\usepackage[expansion=false]{microtype}
\usepackage{enumitem}
\usepackage{booktabs}
\usepackage{xcolor}
\usepackage[hypertexnames=false,colorlinks=true,linkcolor=blue!45!black,citecolor=blue!45!black,urlcolor=blue!45!black]{hyperref}
\numberwithin{equation}{section}
\newtheorem{theorem}{Theorem}[section]

\newtheorem{lemma}[theorem]{Lemma}
\newtheorem{corollary}[theorem]{Corollary}
\theoremstyle{definition}
\newtheorem{definition}[theorem]{Definition}
\newtheorem{example}[theorem]{Example}
\theoremstyle{remark}

\newcommand{\R}{\mathbb R}
\newcommand{\Z}{\mathbb Z}
\newcommand{\HH}{\mathcal H}
\newcommand{\Reg}{\mathcal R}
\newcommand{\Sing}{\mathcal S}
\newcommand{\Man}{\mathcal U}
\newcommand{\Ric}{\operatorname{Ric}}
\newcommand{\Scal}{\operatorname{Scal}}
\newcommand{\Vol}{\operatorname{Vol}}
\newcommand{\diam}{\operatorname{diam}}
\newcommand{\dist}{\operatorname{dist}}
\newcommand{\dimH}{\dim_{\mathrm H}}
\newcommand{\GH}{\mathrm{GH}}
\newcommand{\Tube}{\mathcal T}

\setlist[enumerate]{label=\textup{(\roman*)},leftmargin=2.1em}
\hypersetup{pdftitle={Codimension-four regularity of noncollapsed Ricci limit spaces under an integral volume-deficit bound},pdfauthor={Lingling Kong}}

\title[Volume deficits and codimension-four regularity]{Codimension-four regularity of noncollapsed Ricci limit spaces under an integral volume-deficit bound}
\author{Lingling Kong}
\address[Lingling Kong]{School of Mathematics and Statistics, Northeast Normal University, Changchun, China}
\email{kongll111@nenu.edu.cn}

\date{\today}
\subjclass[2010]{53C23, 53C21, 53C20}
\keywords{Ricci limit space, volume deficit, noncollapsing, singular set, manifold regularity}
\thanks{The author thanks Professor Xiaochun Rong for his valuable comments and suggestions.
The author is supported by the National Natural Science Foundation of China under Grant No.~12271372.
AI assistance was used in formulating condition \textup{(AC)} based on \cite{Kong2026-1}.}
\begin{document}
\begin{abstract}
Let $(X,d,p)$ be an $n$-dimensional noncollapsed Ricci limit space,
where $n\ge4$. Under an integral volume-deficit bound, we prove that
the metric singular set has Hausdorff codimension at least four and
sigma-finite $(n-4)$-dimensional Hausdorff measure. This establishes
a special case of the codimension-four regularity conjecture.
Moreover, the nonmanifold locus is closed and has locally finite
$(n-4)$-dimensional Hausdorff measure, and its intersection with each
bounded ball satisfies a tubular-volume estimate of order $r^4$.
In dimension four, the nonmanifold points form a locally finite set.
A flat quotient example shows that the codimension bound is sharp.
\end{abstract}
\maketitle

\section{Introduction and main theorem}

We consider noncollapsed Ricci limit spaces arising as pointed
Gromov--Hausdorff limits
\begin{equation}\label{eq:setting}
 (M_i^n,g_i,p_i)\xrightarrow{\GH}(X,d,p),\qquad
 \Ric_{g_i}\ge -(n-1)g_i,\qquad
 \Vol_{g_i}(B_1(p_i))\ge v>0,
\end{equation}
where the manifolds are connected, complete and without boundary. We
write $\mu=\HH^n$, with Hausdorff measure normalized to agree with
Riemannian volume, and let $\omega_n$ denote the volume of the Euclidean
unit ball. Volume convergence identifies $\mu$ with the limit of the
Riemannian volume measures \cite{Colding1997,CC1997I}. All balls are
open unless otherwise stated.

The metric regular set $\Reg$ consists of points whose tangent cones
are all isometric to $\R^n$. By Cheeger--Colding theory, the metric
singular set $\Sing=X\setminus\Reg$ has Hausdorff dimension at most
$n-2$ \cite{CC1997I}. Points outside $\Reg$ may nevertheless admit
manifold neighborhoods. For example, a metric cone may be homeomorphic
to Euclidean space even when its vertex is metrically singular. Conjecturally, every
noncollapsed Ricci limit space is a topological manifold outside a
closed set of Hausdorff codimension at least four; see
\cite[Conjecture~0.7]{CC1997I} and the discussion in
\cite{BruePigatiSemola2024+}. Under two-sided Ricci curvature bounds,
codimension-four regularity was established by Cheeger--Naber
\cite{CheegerNaber2015}.

In this paper, we obtain codimension-four regularity under an integral
bound on the volume deficits of small balls in the limit space.
The estimates apply to the metric singular set. A fixed density
threshold for manifold neighborhoods gives locally finite Hausdorff
measure and corresponding local estimates for the nonmanifold locus.

Define the hyperbolic comparison volume and the volume deficit by
\begin{equation}\label{eq:VandD}
 V(r)=n\omega_n\int_0^r(\sinh t)^{n-1}\,dt,
 \qquad
 D_r(x)=1-\frac{\mu(B_r(x))}{V(r)}.
\end{equation}
Bishop--Gromov comparison gives $0\le D_r(x)\le1$, and
$r\mapsto D_r(x)$ is nondecreasing.

\begin{definition}[Quadratic integral volume-deficit condition]\label{def:AC}
We say that $X$ satisfies \textup{(AC)} if, for every $R>0$, there
are constants $C_R<\infty$ and $r_R>0$ such that
\begin{equation}\tag{AC}\label{eq:AC}
 \int_{B_R(p)}D_r(x)^2\,d\mu(x)\le C_Rr^4
 \qquad(0<r<r_R).
\end{equation}
The constants may depend on $X$ and $R$, but not on $r$.
\end{definition}

Define the volume density and the density-deficit sets by
\begin{equation}\label{eq:density}
 \Theta(x)=\lim_{r\rightarrow0}\frac{\mu(B_r(x))}{V(r)},
 \qquad
 S_\delta=\{x\in X:\Theta(x)\le1-\delta\},
 \quad 0<\delta<1.
\end{equation}
For $F\subset X$ and $\rho>0$, write
\[
 \Tube_\rho(F)=\{y\in X:\dist(y,F)<\rho\}.
\]
Let $N_\rho(F)$ be the least number of open balls of radius $\rho$,
with centers in $X$, needed to cover $F$.

\begin{theorem}\label{thm:main}
Suppose that \eqref{eq:setting} holds, $n\ge4$, and $X$ satisfies
\textup{(AC)}. Then the following hold.
\begin{enumerate}
\item For every $R>0$ and $0<\delta<1$, there are constants
$K_{R,\delta}<\infty$ and $\rho_{R,\delta}>0$ such that
\begin{align}
 N_\rho(S_\delta\cap B_R(p))
 &\le K_{R,\delta}\rho^{4-n},\label{eq:main-cover}\\
 \mu\bigl(\Tube_\rho(S_\delta\cap B_R(p))\bigr)
 &\le K_{R,\delta}\rho^4\label{eq:main-tube}
\end{align}
for $0<\rho<\rho_{R,\delta}$. In particular,
\begin{equation}\label{eq:main-finite}
 \HH^{n-4}(S_\delta\cap B_R(p))<\infty.
\end{equation}
\item The metric singular set satisfies
\begin{equation}\label{eq:main-dim}
 \dimH\Sing\le n-4,
\end{equation}
and $\HH^{n-4}$ is sigma-finite on $\Sing$.
\item Define the topological manifold locus by
\begin{equation}\label{eq:manifold-locus}
 \Man=\{x\in X:\text{$x$ has an open neighborhood homeomorphic to $\R^n$}\}.
\end{equation}
Then $E=X\setminus\Man$ is closed, and $\Man$ is an open dense
topological $n$-manifold without boundary. For every $R>0$,
\begin{equation}\label{eq:top-finite}
 \HH^{n-4}(E\cap B_R(p))<\infty.
\end{equation}
Moreover, for some $K_R<\infty$ and all sufficiently small $\rho>0$,
\begin{align}
 N_\rho(E\cap B_R(p))&\le K_R\rho^{4-n},\label{eq:top-cover}\\
 \mu\bigl(\Tube_\rho(E\cap B_R(p))\bigr)&\le K_R\rho^4.
 \label{eq:top-tube}
\end{align}
Consequently, $\dimH E\le n-4$.
\end{enumerate}
\end{theorem}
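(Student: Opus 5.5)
The core of the argument is a Chebyshev-type estimate driven by \eqref{eq:AC} together with monotonicity of $D_r$. Fix $R$ and $\delta$. If $x\in S_\delta$, then $\Theta(x)\le1-\delta$, so $\lim_{r\to0}D_r(x)\ge\delta$, and since $D_r(x)$ is nondecreasing in $r$ we get $D_r(x)\ge\delta$ for every $r>0$. Next we propagate this to nearby points. For $y\in B_\rho(x)$, the inclusion $B_{\rho}(y)\subset B_{2\rho}(x)$ together with Bishop--Gromov lets us compare ratios:
\[
\mu(B_{\rho}(y))\le\mu(B_{2\rho}(x))\le(1-\delta)V(2\rho)\cdot(1+o(1)).
\]
This bound is too weak, because $V(2\rho)/V(\rho)\approx2^n$. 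The fix is to use a larger radius at $y$. Take $y\in B_{\epsilon\rho}(x)$ with $\epsilon=\epsilon(n,\delta)$ small, and compare $B_{\rho}(y)\subset B_{(1+\epsilon)\rho}(x)$. Then
\[
\frac{\mu(B_\rho(y))}{V(\rho)}\le(1-\delta)\frac{V((1+\epsilon)\rho)}{V(\rho)}\le1-\delta/2
\]
for $\epsilon$ small and $\rho<\rho_{R,\delta}$. Hence $D_\rho(y)\ge\delta/2$ on $B_{\epsilon\rho}(x)$. Consequently the set $\{y\in B_{R+1}(p):D_\rho(y)\ge\delta/2\}$ contains $\Tube_{\epsilon\rho}(S_\delta\cap B_R(p))$. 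By Chebyshev and \eqref{eq:AC} with $R+1$, its measure is at most $4\delta^{-2}C_{R+1}\rho^4$.

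From this we get \eqref{eq:main-tube} after replacing $\rho$ by $\rho/\epsilon$, which rescales constants. For \eqref{eq:main-cover}, take a maximal $\epsilon\rho$-separated subset of $S_\delta\cap B_R(p)$. The balls $B_{\epsilon\rho/2}$ around its points are disjoint and lie in the tube. Each has measure $\ge c(n,v,R)\rho^n$ by Bishop--Gromov and noncollapsing, since volume convergence gives $\mu(B_1(q))$ bounded below on $B_{R+1}(p)$. Counting then gives the bound $K\rho^{4-n}$, and \eqref{eq:main-finite} follows from the covering definition of $\HH^{n-4}$.

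For (ii), note that Colding's volume convergence and Cheeger--Colding's almost-rigidity (volume cone implies metric cone) give $x\in\Reg\iff\Theta(x)=1$. Therefore $\Sing=\bigcup_k S_{1/k}$. Intersecting with $B_k(p)$ gives a countable union of sets of finite $\HH^{n-4}$ measure, so $\HH^{n-4}$ is sigma-finite on $\Sing$. Since $\HH^{s}(S_{1/k}\cap B_k)=0$ for every $s>n-4$, we get $\dimH\Sing\le n-4$.

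For (iii), we use Cheeger--Colding's Reifenberg theorem: there is $\delta_0(n)>0$ such that if $\mu(B_r(x))\ge(1-\delta_0)V(r)$ for some small $r$, then $B_{r/2}(x)$ is bi-H\"older homeomorphic to a Euclidean ball. Hence every point with $\Theta(x)>1-\delta_0$ lies in $\Man$. By monotonicity the condition at a fixed positive radius is open, which handles closedness. Thus $E\subset S_{\delta_0}$, and the estimates \eqref{eq:top-finite}--\eqref{eq:top-tube} follow from (i) with $\delta=\delta_0$, taking $K_R=K_{R,\delta_0}$.

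$E$ is closed because $\Man$ is open by definition, since the homeomorphic neighborhood works for nearby points. $\Man$ is a topological $n$-manifold without boundary by definition. Density of $\Man$ holds because $\mu(E\cap B_R)\le\mu(\Tube_\rho)\to0$ forces $\mu(E)=0$, while every ball has positive measure.

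The main obstacle is the step $E\subset S_{\delta_0}$. It needs the precise form of the Reifenberg/volume-pinching theorem, namely an $\epsilon$-regularity at density close to $1$ that gives a manifold neighborhood. I would cite Cheeger--Colding \cite{CC1997I}, Theorem A.1.8, in the form: for limit spaces, $\Theta(x)>1-\delta_0$ implies a manifold neighborhood. A second technical point is the perturbation step $D_\rho(y)\ge\delta/2$, which I expect to be routine once $\epsilon$ is chosen depending on $n$ and $\delta$.
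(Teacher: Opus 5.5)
Your proposal is correct and follows essentially the same route as the paper: persistence of the deficit at $x\in S_\delta$, a Bishop--Gromov perturbation giving $D_\rho\ge\delta/2$ on $B_{\epsilon\rho}(x)$, and Chebyshev with \textup{(AC)} on $B_{R+1}(p)$ for the tube and packing bounds. Then the density-threshold Reifenberg lemma gives $E\subset S_{\delta_0}$, after which part (i) applies; the only differences are cosmetic (you prove the propagation lemma the paper merely states, count balls via the tube estimate rather than by summing over disjoint balls, and get density of $\Man$ from $\mu(E)=0$ rather than from $\dimH E<n$).
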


\begin{corollary}\label{cor:four}
If $n=4$, the metric singular set is at most countable, whereas the
nonmanifold points form a locally finite set. Thus $X$ is a topological
$4$-manifold outside a closed set that meets each bounded ball in
finitely many points.
\end{corollary}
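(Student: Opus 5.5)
The corollary is the case $n=4$ of Theorem~\ref{thm:main}, so the plan is to read off parts (ii) and (iii) with $n-4=0$.

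\emph{Metric singular set.} Part (ii) says that $\HH^{n-4}=\HH^0$ is sigma-finite on $\Sing$. Since $\HH^0$ is counting measure, $\Sing$ is a countable union of finite sets, hence at most countable. I would add a remark that countability cannot be improved to local finiteness. Each $S_\delta\cap B_R(p)$ is finite by \eqref{eq:main-finite}, but $\Sing\cap B_R(p)$ may still be infinite. The reason is that $\Sing$ consists of points with $\Theta(x)<1$, and $\Theta(x)$ may tend to $1$ along a sequence of singular points. Indeed, by volume rigidity, $\Theta(x)=1$ forces every tangent cone to be $\R^n$, so $\Sing=\bigcup_k S_{1/k}$. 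This shows why the two halves of the statement differ.

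\emph{Nonmanifold locus.} By part (iii), $E=X\setminus\Man$ is closed and $\HH^0(E\cap B_R(p))<\infty$ for every $R$. So $E\cap B_R(p)$ is a finite set for every $R>0$. The covering bound \eqref{eq:top-cover} with exponent $4-n=0$ gives the same conclusion, namely a uniform bound on $N_\rho$. Every bounded set lies in some $B_R(p)$, so $E$ meets each bounded ball in finitely many points, and a closed set with this property is locally finite. Part (iii) also says $\Man=X\setminus E$ is an open topological $4$-manifold without boundary. Hence $X$ is a topological $4$-manifold outside the closed, locally finite set $E$.

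\emph{Main obstacle.} There is essentially none beyond Theorem~\ref{thm:main} itself. The only points needing care are the identification of $\HH^0$ with counting measure, and the fact that the density gap $1-\Theta$ has no uniform lower bound on $\Sing$, which explains why only countability holds there.
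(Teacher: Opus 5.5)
Your proof is correct and essentially the paper's. Countability of $\Sing$ comes from writing it as the countable union of the sets $S_{1/m}\cap B_\ell(p)$, each of which is finite when $n=4$; this is exactly what $\HH^0$-sigma-finiteness in part (ii) encodes. Local finiteness of $E$ is read off from $\HH^0(E\cap B_R(p))<\infty$ in \eqref{eq:top-finite}.

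One caution concerns your side assertions that countability ``cannot be improved'' and that $1-\Theta$ has no uniform lower bound on $\Sing$: neither is proved. The argument merely fails to produce such a bound; it does not exhibit accumulating singular points. In dimension four, where \textup{(AC)} is scale-invariant, it is not clear such points can occur. These claims should therefore be dropped or restated as a limitation of the method.
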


Condition \textup{(AC)} is an additional hypothesis. It holds on
bounded sets in every fixed smooth complete manifold satisfying the
Ricci lower bound. However, it may fail in a noncollapsed limit if
the constants are not controlled uniformly along the sequence.
A cone example exhibiting this failure is given in
\cite[Example 4.1]{Kong2026-1}. Section~\ref{sec:examples} gives
a flat quotient example for which the codimension-four bound is attained.

\section{Volume comparison and density deficits}\label{sec:prelim}

We recall the consequences of Ricci limit theory used below.
The space $X$ is proper and separable, and $\mu$ has full support
and is finite on bounded sets. Bishop--Gromov comparison gives
\begin{equation}\label{eq:BG}
 \frac{\mu(B_s(x))}{V(s)}
 \ge \frac{\mu(B_t(x))}{V(t)}
 \qquad(0<s\le t).
\end{equation}
Noncollapsed volume comparison also gives $\mu(B_t(x))\le V(t)$.
For this upper volume bound and the associated density rigidity,
see \cite{CC1997I,DePhilippisGigli2018, Kong2026-1}.

\begin{lemma}[Local lower volume bound]\label{lem:lower}
For every $L>0$, there is $c_L>0$ such that
\begin{equation}\label{eq:lower}
 \mu(B_t(z))\ge c_Lt^n
 \qquad(z\in B_L(p),\ 0<t\le1).
\end{equation}
Moreover, there is a dimensional constant $b_n<\infty$ such that
\begin{equation}\label{eq:upper}
 \mu(B_t(z))\le V(t)\le b_nt^n\qquad(0<t\le1).
\end{equation}
\end{lemma}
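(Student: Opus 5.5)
The plan is to get the lower bound \eqref{eq:lower} from Bishop--Gromov \eqref{eq:BG} together with the noncollapsing hypothesis in \eqref{eq:setting}, and the upper bound \eqref{eq:upper} from the noncollapsed comparison $\mu(B_t(z))\le V(t)$ plus an elementary estimate on $V$.

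\textbf{Lower bound.} Fix $L>0$, $z\in B_L(p)$ and $0<t\le1$.

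1. Volume convergence \cite{Colding1997,CC1997I} passes $\Vol_{g_i}(B_1(p_i))\ge v$ to the limit, giving $\mu(B_1(p))\ge v$. Convergence of measures only yields this directly for closed balls. For the open ball, apply the argument to $B_{1-\epsilon}$ with a slightly smaller lower bound, or apply Bishop--Gromov on $M_i$ first and then pass to the limit. Either way we get $\mu(B_1(p))\ge v'$ for some $v'>0$ depending only on $v$ and $n$.

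2. Since $B_1(p)\subset B_{L+1}(z)$, we have $\mu(B_{L+1}(z))\ge v'$.

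3. Apply \eqref{eq:BG} with $s=t\le L+1$:
\[
 \mu(B_t(z))\ge \frac{V(t)}{V(L+1)}\,\mu(B_{L+1}(z))\ge \frac{v'}{V(L+1)}\,V(t).
\]

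4. For $0<t\le1$ we have $\sinh s\ge s$, so $V(t)\ge\omega_nt^n$. Hence \eqref{eq:lower} holds with $c_L=v'\omega_n/V(L+1)$.

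\textbf{Upper bound.} The inequality $\mu(B_t(z))\le V(t)$ is the noncollapsed volume comparison recalled just before the lemma. It follows by passing the Riemannian Bishop--Gromov bound $\Vol(B_t)\le V(t)$ through volume convergence; lower semicontinuity of $\mu$ on open sets is enough for this direction. For the second inequality, on $[0,1]$ we have $\sinh s\le s\cosh 1$, so
\[
 V(t)\le \omega_n(\cosh 1)^{n-1}t^n .
\]
Thus $b_n=\omega_n(\cosh1)^{n-1}$ works.

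\textbf{Main obstacle.} The only delicate point is transferring the noncollapsing bound from $M_i$ to $\mu$ on open balls, which needs the standard volume convergence theorem. Everything else is comparison geometry.
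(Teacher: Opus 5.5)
Your proposal is correct and is the standard derivation the paper relies on. The paper gives no explicit proof and lists this lemma among recalled consequences of Ricci limit theory. Your argument supplies exactly those ingredients: Bishop--Gromov applied from radius $L+1$ down to $t$, the noncollapsing bound passed to $\mu$ through volume convergence, and the comparison $\mu(B_t(z))\le V(t)$ combined with $s\le\sinh s\le s\cosh 1$ on $[0,1]$. Your constants $c_L=v'\omega_n/V(L+1)$ and $b_n=\omega_n(\cosh 1)^{n-1}$ are valid. The open-versus-closed-ball precaution is harmless, since Colding's theorem already gives convergence of volumes of open balls.
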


\begin{lemma}[Density and the regular set]\label{lem:density}
The density in \eqref{eq:density} exists at every point and satisfies
\begin{equation}\label{eq:theta-properties}
 0<\Theta(x)\le1,\qquad
 \Theta(x)=\lim_{r\rightarrow0}\frac{\mu(B_r(x))}{\omega_nr^n},
 \qquad
 \Reg=\{x:\Theta(x)=1\}.
\end{equation}
Furthermore, $\Theta$ is lower semicontinuous, each $S_\delta$ is
closed, and
\begin{equation}\label{eq:sing-union}
 \Sing=\bigcup_{m=2}^{\infty}S_{1/m}.
\end{equation}
For $x\in S_\delta$, one has
\begin{equation}\label{eq:persistent}
 \mu(B_t(x))\le(1-\delta)V(t)\qquad(t>0).
\end{equation}
\end{lemma}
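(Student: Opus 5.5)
The plan is to derive everything from the Bishop--Gromov monotonicity \eqref{eq:BG}, together with the standard volume-convergence and volume-rigidity results of Cheeger--Colding for noncollapsed limits. Fix $x\in X$ and set $f_r(x)=\mu(B_r(x))/V(r)$. By \eqref{eq:BG}, $r\mapsto f_r(x)$ is nonincreasing, and by \eqref{eq:upper} it is bounded above by $1$. Hence the limit $\Theta(x)=\lim_{r\to0}f_r(x)$ exists and equals $\sup_{r>0}f_r(x)\le1$. Positivity follows from Lemma~\ref{lem:lower}: for $x\in B_L(p)$ and small $r$ we have $f_r(x)\ge c_Lr^n/(b_nr^n)>0$. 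Since $V(r)/(\omega_nr^n)\to1$ as $r\to0$ (because $\sinh t\sim t$), the two normalizations in \eqref{eq:theta-properties} give the same limit. Moreover, the supremum representation immediately gives \eqref{eq:persistent}: if $\Theta(x)\le1-\delta$, then $f_t(x)\le\Theta(x)\le1-\delta$ for every $t>0$.

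Next I prove lower semicontinuity. For fixed $r$, the map $x\mapsto\mu(B_r(x))$ is lower semicontinuous, since the balls are open. Indeed, if $x_k\to x$, then every $y\in B_r(x)$ lies in $B_r(x_k)$ for all large $k$. Fatou's lemma applied to the indicator functions then gives $\mu(B_r(x))\le\liminf_k\mu(B_r(x_k))$. So $\Theta=\sup_rf_r$ is a supremum of lower semicontinuous functions, hence lower semicontinuous. It follows that each sublevel set $S_\delta=\{\Theta\le1-\delta\}$ is closed. Once $\Reg=\{\Theta=1\}$ is established, \eqref{eq:sing-union} is immediate. A point lies in $\Sing$ iff $\Theta(x)<1$, iff $\Theta(x)\le1-1/m$ for some integer $m\ge2$.

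The main step, and the expected obstacle, is the identification $\Reg=\{\Theta=1\}$. I would argue through tangent cones. Let $(C,o)$ be any tangent cone at $x$, obtained as the pointed Gromov--Hausdorff limit of $(X,r_j^{-1}d,x)$ with $r_j\to0$. The rescaled spaces are themselves noncollapsed Ricci limits with Ricci lower bound $-(n-1)r_j^2\to0$. By Cheeger--Colding, $C$ is a metric cone. By Colding's volume convergence, applied along a diagonal sequence of smooth approximants, we get $\HH^n(B_1(o))=\lim_j\mu(B_{r_j}(x))/r_j^n=\omega_n\Theta(x)$. If $\Theta(x)=1$, then every tangent cone has Euclidean unit-ball volume, and volume rigidity (Cheeger--Colding; De Philippis--Gigli in the noncollapsed RCD setting) forces $C\cong\R^n$, so $x\in\Reg$. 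Conversely, if some tangent cone is $\R^n$, the same volume identity gives $\Theta(x)=1$. Since the limit defining $\Theta$ exists, all tangent cones then have the same unit-ball volume. The delicate points are (a) justifying the volume convergence along the rescaled, diagonally chosen sequence of manifolds, and (b) invoking the rigidity statement in the form appropriate to limit spaces rather than smooth manifolds. For both I would cite \cite{Colding1997,CC1997I,DePhilippisGigli2018}.
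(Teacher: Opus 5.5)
Your proof is correct and follows the route the paper relies on. The paper states this lemma without proof, recalling it from Cheeger--Colding and De Philippis--Gigli; your argument gives the standard derivation. The elementary parts (existence, $0<\Theta\le 1$, the persistence estimate, lower semicontinuity and closedness of $S_\delta$) come from Bishop--Gromov monotonicity via $\Theta=\sup_r \mu(B_r(x))/V(r)$, and $\Reg=\{\Theta=1\}$ comes from volume convergence for tangent cones plus volume rigidity. The two points you flag as delicate are exactly what those references supply.
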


\begin{lemma}[Propagation of density deficits to nearby centers]\label{lem:propagation}
For every $0<\delta<1$, there is
$\eta=\eta(n,\delta)\in(0,1/4)$ such that, for $x\in S_\delta$
and $0<r\le1$,
\begin{equation}\label{eq:propagation}
 D_r(y)\ge\frac{\delta}{2}
 \qquad(y\in B_{\eta r}(x)).
\end{equation}
\end{lemma}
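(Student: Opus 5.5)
The approach is to compare $\mu(B_r(y))$ with $\mu(B_{r'}(x))$ for a slightly larger radius $r'$. We then use the persistent deficit \eqref{eq:persistent} at $x$, together with the near-continuity of $V$ in its radius. Fix $x\in S_\delta$, $0<r\le1$ and $y\in B_{\eta r}(x)$, with $\eta$ to be chosen. By the triangle inequality, $B_r(y)\subset B_{(1+\eta)r}(x)$. Hence \eqref{eq:persistent} with $t=(1+\eta)r$ gives
\[
 \mu(B_r(y))\le\mu\bigl(B_{(1+\eta)r}(x)\bigr)\le(1-\delta)V\bigl((1+\eta)r\bigr).
\]
Dividing by $V(r)$ yields
\[
 1-D_r(y)\le(1-\delta)\,\frac{V((1+\eta)r)}{V(r)}.
\]

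It remains to choose $\eta=\eta(n,\delta)$ so that the ratio $V((1+\eta)r)/V(r)$ is uniformly close to $1$ for $0<r\le1$. Write $V((1+\eta)r)=n\omega_n\int_0^{(1+\eta)r}(\sinh t)^{n-1}dt$ and substitute $t=(1+\eta)s$. Since $\sinh((1+\eta)s)\le(1+\eta)e^{\eta s}\sinh s$ (or by a direct monotonicity argument: $\sinh(\lambda s)/\sinh s$ is increasing in $s$), we get for $r\le1$
\[
 \frac{V((1+\eta)r)}{V(r)}\le(1+\eta)^{n}\Bigl(\frac{\sinh((1+\eta))}{(1+\eta)\sinh 1}\Bigr)^{n-1}\le(1+\eta)^n e^{(n-1)\eta}=:\Lambda_n(\eta).
\]
A cruder route is also fine: $V(r)\ge\omega_nr^n$ and $V((1+\eta)r)\le\omega_n(1+\eta)^nr^n\,(\sinh 2/2)^{n-1}$. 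That bound, however, does not tend to $1$, so the monotonicity argument is the one to use. The key fact is that $s\mapsto\sinh(\lambda s)/\sinh(s)$ is nondecreasing for $\lambda\ge1$. Therefore on $[0,1]$ it is bounded by $\sinh\lambda/\sinh 1$, and this bound tends to $1$ as $\lambda\to1$.

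Then $1-D_r(y)\le(1-\delta)\Lambda_n(\eta)$. Since $\Lambda_n(\eta)\to1$ as $\eta\to0$, we may choose $\eta\in(0,1/4)$ depending only on $n,\delta$ so that $(1-\delta)\Lambda_n(\eta)\le1-\delta/2$. Explicitly, this is $\Lambda_n(\eta)\le(1-\delta/2)/(1-\delta)$, which holds for small $\eta$ because the right side exceeds $1$. This gives $D_r(y)\ge\delta/2$, which is \eqref{eq:propagation}.

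The only step needing care is the uniform control of $V((1+\eta)r)/V(r)$ over $r\in(0,1]$. That control is exactly where the restriction $r\le1$ enters, and the monotonicity of $\sinh(\lambda s)/\sinh s$ handles it. Everything else is the ball inclusion and \eqref{eq:persistent} from Lemma~\ref{lem:density}.
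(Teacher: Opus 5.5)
Your proof is correct. The inclusion $B_r(y)\subset B_{(1+\eta)r}(x)$ and the persistent deficit \eqref{eq:persistent} at $x$ give $1-D_r(y)\le(1-\delta)V((1+\eta)r)/V(r)$. The uniform bound $V((1+\eta)r)\le(1+\eta)\bigl(\sinh(1+\eta)/\sinh 1\bigr)^{n-1}V(r)$ for $0<r\le 1$ follows from the monotonicity of $s\mapsto\sinh(\lambda s)/\sinh s$, since $u\coth u$ is increasing. This yields $D_r(y)\ge\delta/2$ once $\eta=\eta(n,\delta)$ is small. The paper states this lemma without proof, recalling it as a consequence of Bishop--Gromov comparison, and your argument is the standard one it relies on.
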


\begin{lemma}[A density threshold for manifold neighborhoods]\label{lem:topological}
There exists $\delta_*=\delta_*(n)\in(0,1)$ such that
\begin{equation}\label{eq:density-threshold}
 \Theta(x)>1-\delta_*
 \quad\Longrightarrow\quad x\in\Man.
\end{equation}
In particular, $E\subset S_{\delta_*}$.
\end{lemma}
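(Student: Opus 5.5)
This lemma is the one place where deep topological input from the noncollapsed theory enters. I would prove it by combining three known results: Cheeger--Colding volume cone implies metric cone, almost-volume-rigidity, and the Reifenberg-type manifold recognition theorem of Cheeger--Colding \cite[Theorem~A.1.8]{CC1997I}. That theorem says the following. There is $\epsilon(n)>0$ such that if a noncollapsed limit satisfies $d_{\GH}(B_s(y),B_s(0^n))\le\epsilon s$ for all $y\in B_{r_0}(x)$ and all $0<s\le r_0$, then $B_{r_0/2}(x)$ is bi-H\"older homeomorphic to a Euclidean ball. So the task is to turn a lower bound on $\Theta(x)$ into uniform Euclidean closeness at all small scales and at all nearby centers.

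First I use \emph{Colding's volume convergence} together with Cheeger--Colding almost rigidity: for every $\epsilon>0$ there is $\delta_0(n,\epsilon)>0$ such that, in a noncollapsed limit with $\Ric\ge-(n-1)$, the condition $\mu(B_s(y))\ge(1-\delta_0)V(s)$ with $s\le s_0(n,\epsilon)$ implies $d_{\GH}(B_{s}(y),B_{s}(0^n))\le\epsilon s$. In the limit space this can be run directly through the approximating sequence \eqref{eq:setting}, since volume converges and almost rigidity holds on the manifolds $M_i$. Set $\delta_*=\delta_0(n,\epsilon(n))/4$.

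Now suppose $\Theta(x)>1-\delta_*$. I pick $r_1\le s_0$ with $\mu(B_{r_1}(x))/V(r_1)>1-2\delta_*$. For $y\in B_{\eta r_1}(x)$ with $\eta$ small, I get
\[
\mu(B_{(1-\eta)r_1}(y))\ge\mu(B_{(1-2\eta)r_1}(x)),
\]
and the right side is controlled from below by \eqref{eq:BG} applied at $x$. This gives $\mu(B_{(1-\eta)r_1}(y))/V((1-\eta)r_1)\ge 1-3\delta_*$ once $\eta=\eta(n,\delta_*)$ is small, because the ratio $V((1-2\eta)r_1)/V((1-\eta)r_1)$ tends to $1$ uniformly for $r_1\le1$. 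This is the same argument as Lemma~\ref{lem:propagation}, run in reverse. Monotonicity \eqref{eq:BG} at the center $y$ then gives $\mu(B_s(y))\ge(1-3\delta_*)V(s)\ge(1-\delta_0)V(s)$ for every $0<s\le(1-\eta)r_1$. Almost rigidity therefore yields $\epsilon(n)$-Euclidean balls at every center in $B_{\eta r_1}(x)$ and every scale below $(1-\eta)r_1$. The Reifenberg theorem then gives a neighborhood of $x$ homeomorphic to a Euclidean ball, and hence to $\R^n$, so $x\in\Man$. The inclusion $E\subset S_{\delta_*}$ follows immediately: if $x\notin S_{\delta_*}$ then $\Theta(x)>1-\delta_*$, so $x\in\Man$.

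The main obstacle is the Reifenberg step. Its hypothesis must hold at all scales and all centers, not just at $x$, and the theorem must be applicable on the limit space itself rather than on the smooth approximants. The uniform-in-scale statement is exactly what Bishop--Gromov monotonicity provides at each nearby center. For applicability, I would quote \cite[Theorem~A.1.8]{CC1997I}, which is formulated for metric spaces with Reifenberg-flat balls. I would also check that its constants depend only on $n$, so that $\delta_*$ depends only on $n$.
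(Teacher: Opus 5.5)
Your proposal is correct and follows the same route as the paper's proof sketch: volume almost-rigidity, made uniform over nearby centers and all small scales via Bishop--Gromov monotonicity, followed by the Cheeger--Colding metric Reifenberg theorem. Your version spells out the propagation to nearby centers and smaller scales explicitly, via the inclusion $B_{(1-2\eta)r_1}(x)\subset B_{(1-\eta)r_1}(y)$ and monotonicity at $y$. The paper leaves that step implicit.
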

\begin{proof}[Proof sketch]
Volume almost-rigidity gives Euclidean approximations in normalized
Gromov--Hausdorff distance. To apply the local metric Reifenberg
theorem, these approximations must hold uniformly at nearby centers
and at all sufficiently small scales. The theorem then yields a
manifold neighborhood of $x$.
\end{proof}

\section{Proof of the main theorem}\label{sec:proof}

Fix $R>0$ and $0<\delta<1$, and put
$F=S_\delta\cap B_R(p)$. Let $\eta$ be as in
Lemma~\ref{lem:propagation}. All radii are taken sufficiently small
for \textup{(AC)} to apply on $B_{R+1}(p)$.

\subsection{Packing and Hausdorff measure}
For $0<r<\min\{1,r_{R+1}\}$, choose a maximal family of pairwise
disjoint balls
\[
 \{B_{\eta r}(x_j)\}_{j=1}^{N},\qquad x_j\in F.
\]
The family is finite: each ball has volume at least
$c_{R+1}(\eta r)^n$, and all the balls lie in $B_{R+1}(p)$,
which has finite measure. By maximality,
\begin{equation}\label{eq:cover}
 F\subset\bigcup_{j=1}^{N}B_{2\eta r}(x_j).
\end{equation}
Indeed, the ball of radius $\eta r$ centered at any uncovered
point could be added to the family.

By \textup{(AC)} and Lemmas~\ref{lem:lower}
and~\ref{lem:propagation},
\begin{align}
 C_{R+1}r^4
 &\ge\int_{B_{R+1}(p)}D_r(y)^2\,d\mu(y)\notag\\
 &\ge\sum_{j=1}^{N}\int_{B_{\eta r}(x_j)}D_r(y)^2\,d\mu(y)\notag\\
 &\ge N(\delta/2)^2c_{R+1}\eta^nr^n.
 \label{eq:packing-calculation}
\end{align}
Consequently,
\begin{equation}\label{eq:packing}
 N\le A_{R,\delta}r^{4-n},\qquad
 A_{R,\delta}=
 \frac{C_{R+1}}{(\delta/2)^2c_{R+1}\eta^n}.
\end{equation}
Taking $\rho=2\eta r$ in \eqref{eq:cover}--\eqref{eq:packing}
gives \eqref{eq:main-cover}.

For $n>4$, the same covering satisfies
\[
 \sum_{j=1}^{N}
 \bigl(\diam B_{2\eta r}(x_j)\bigr)^{n-4}
 \le A_{R,\delta}(4\eta)^{n-4}.
\]
Letting $r\rightarrow0$ proves \eqref{eq:main-finite}, with the
normalization constant for Hausdorff measure absorbed into the bound. When $n=4$, \eqref{eq:packing} bounds the
number of covering balls independently of $r$, which forces $F$
to be finite. Indeed, any finite subset of $F$ whose cardinality exceeds this bound
has positive minimum pairwise distance and cannot be covered by
so few sufficiently small balls.
This proves \eqref{eq:main-finite} in the case $n=4$ as well.

For every $q>n-4$, the same covering gives
\[
 \sum_{j=1}^{N}
 \bigl(\diam B_{2\eta r}(x_j)\bigr)^q
 \le A_{R,\delta}(4\eta)^q r^{q+4-n}
 \longrightarrow0.
\]
It follows that $\dimH F\le n-4$. By Lemma~\ref{lem:density},
\[
 \Sing=
 \bigcup_{m=2}^{\infty}\ \bigcup_{\ell=1}^{\infty}
 \bigl(S_{1/m}\cap B_\ell(p)\bigr).
\]
Countable stability of Hausdorff dimension gives
\eqref{eq:main-dim}. Each set in this union has finite $\HH^{n-4}$-measure by the
preceding argument. Hence $\HH^{n-4}$ is sigma-finite on $\Sing$.

\subsection{Tubular volume}
Set $r=\rho/\eta$ and choose $\rho$ sufficiently small that
$r<\min\{1,r_{R+1}\}$ and $\rho<1$. Then
$\Tube_\rho(F)\subset B_{R+1}(p)$. Every point of this neighborhood lies in $B_{\eta r}(x)$ for
some $x\in S_\delta$. Thus
Lemma~\ref{lem:propagation} and \textup{(AC)} give
\begin{align*}
 (\delta/2)^2\mu(\Tube_\rho(F))
 &\le\int_{\Tube_\rho(F)}D_{\rho/\eta}(y)^2\,d\mu(y)\\
 &\le C_{R+1}(\rho/\eta)^4.
\end{align*}
This proves \eqref{eq:main-tube}.

\subsection{The closed topological exceptional set}
The manifold locus $\Man$ is open: every point in a Euclidean chart
has a smaller neighborhood homeomorphic to Euclidean space. Thus
$E=X\setminus\Man$ is closed. By Lemma~\ref{lem:topological},
\begin{equation}\label{eq:E-fixed-density}
 E\subset S_{\delta_*}\subset\Sing.
\end{equation}
Applying part \textup{(i)} with $\delta=\delta_*$ and using
monotonicity under inclusion gives
\eqref{eq:top-finite}--\eqref{eq:top-tube}.

Since $\Man$ is an open subset of the separable metric space $X$,
it is Hausdorff and second countable. Its local Euclidean charts
therefore make it a topological $n$-manifold without boundary. Since
$\dimH E\le n-4<n$ and $\mu$ has full support, $E$ has empty
interior. Hence $\Man$ is dense. This completes the proof of
Theorem~\ref{thm:main}.

When $n=4$, each set $S_{1/m}\cap B_\ell(p)$ is finite, and hence
their countable union $\Sing$ is at most countable. Moreover,
\eqref{eq:top-finite} gives $\HH^0(E\cap B_R(p))<\infty$,
which proves the local finiteness asserted in
Corollary~\ref{cor:four}.\qed

\section{Geometric interpretation and examples}\label{sec:examples}

\subsection{Smooth small-ball asymptotics}
On a smooth Riemannian manifold, the volume of a small geodesic ball
admits the expansion
\begin{equation}\label{eq:smooth-volume}
 \Vol(B_r(x))=\omega_nr^n
 \left(1-\frac{\Scal(x)}{6(n+2)}r^2+O(r^4)\right);
\end{equation}
here the remainder is uniform as the center varies over a fixed compact
set \cite{GrayVanhecke1979}. Since the comparison space has sectional
curvature $-1$ and scalar curvature $-n(n-1)$,
\[
 V(r)=\omega_nr^n
 \left(1+\frac{n(n-1)}{6(n+2)}r^2+O(r^4)\right).
\]
Consequently,
\begin{equation}\label{eq:smooth-deficit}
 D_r(x)=\frac{\Scal(x)+n(n-1)}{6(n+2)}r^2+O(r^4).
\end{equation}
For every compact measurable set $K$ in a smooth complete manifold,
this uniform expansion gives
\begin{equation}\label{eq:scalar-limit}
 \lim_{r\rightarrow0}r^{-4}\int_K D_r(x)^2\,d\Vol(x)
 =
 \frac{1}{36(n+2)^2}
 \int_K\bigl(\Scal(x)+n(n-1)\bigr)^2\,d\Vol(x).
\end{equation}
This identity explains the scaling in \textup{(AC)}. In particular,
every fixed smooth complete manifold with the prescribed Ricci lower
bound satisfies \textup{(AC)} on bounded sets, with constants depending
on the local geometry. On a nonsmooth limit, \textup{(AC)} is used
only as a condition on metric ball volumes; no interpretation in terms
of $L^2$ scalar curvature is needed.

\subsection{Sharpness of codimension four}

\begin{example}[A flat quotient with a codimension-four nonmanifold set]\label{ex:sharp}
For $n\ge4$, let
\[
 X_n=\R^{n-4}\times(\R^4/\{\pm1\}),\qquad
 S=\R^{n-4}\times\{[0]\},
\]
with the product quotient metric and basepoint $p=(0,[0])$.
The Eguchi--Hanson metric is a complete Ricci-flat metric with
asymptotic cone $\R^4/\{\pm1\}$
\cite{EguchiHanson1979}; see also
\cite[Remark~1.2]{BruePigatiSemola2024+}.
Taking the product of each member of its blow-down sequence with
$\R^{n-4}$ realizes $X_n$ as a noncollapsed limit of smooth
Ricci-flat $n$-manifolds.

To verify \textup{(AC)}, retain the hyperbolic comparison volume
and put
\[
 h_n(r)=1-\frac{\omega_nr^n}{V(r)}=O_n(r^2).
\]
For $x=(z,[u])$ with $|u|>r$, the inverse image of $B_r(x)$
under the quotient map consists of two disjoint Euclidean balls
of radius $r$. Consequently,
\[
 \mu(B_r(x))=\omega_nr^n,\qquad D_r(x)=h_n(r).
\]
On the remaining region, we use $D_r\le1$ to obtain
\begin{align*}
 \int_{B_R(p)}D_r(x)^2\,d\mu(x)
 &\le\mu\bigl(\{|u|\le r\}\cap B_R(p)\bigr)
      +h_n(r)^2\mu(B_R(p))\\
 &\le C_{n,R}r^4.
\end{align*}
The last inequality follows since the transverse quotient ball
has volume $\frac12\omega_4r^4$ and the Euclidean factor is
restricted to a bounded ball. Hence \textup{(AC)} holds.

The complement of $S$ is a smooth manifold. For $x\in S$, local
homology gives
\[
 H_{n-2}(X_n,X_n\setminus\{x\};\Z)
 \cong\widetilde H_1(\mathbb{RP}^3;\Z)
 \cong\Z/2\Z.
\]
Local homology at the vertex of a cone is the reduced homology of
its link shifted by one degree; taking a product with $\R^{n-4}$
introduces a further shift by $n-4$. At a point of a topological $n$-manifold, the local homology
group in this degree vanishes. Thus every point of $S$ is a nonmanifold point, and
$E=S$. Since $\dimH S=n-4$, the codimension bound in
Theorem~\ref{thm:main} is sharp under \textup{(AC)}.
\end{example}

\bibliographystyle{plain-no-oxford}
\bibliography{reference}
\end{document}